\newtheorem{thm}{Theorem}
\newtheorem{lem}{Lemma}
\newcommand*{\di}{\, d}
\newcommand*{\xin}{\ensuremath{1 + \xi_1 + \xi_1 \xi_2 + \cdots +
        \xi_1 \cdots \xi_n}}
\newcommand*{\xis}{\ensuremath{1 + \xi_1 + \xi_1 \xi_2 + \cdots}}
\newcommand*{\ev}{\ensuremath{\mathbf{E}}}
\newcommand*{\pr}{\ensuremath{\mathbf{P}}}
\newcommand*{\ivect}{\ensuremath{ \underline{i}}}
\newcommand*{\anum}[1]{\ensuremath{ a_k^{( #1 )}( \underline{i})}}
\newcommand*{\law}{\ensuremath{\overset{\mathsf{d}}{=}}}
\begin{document}

\title{An exponential functional of random walks}

\author{Tam\'as Szabados\footnote{Corresponding author, address:
Department of Mathematics, Budapest University of Technology and
Economics, M\H{u}egyetem rkp. 3, H \'ep. V em. Budapest, 1521,
Hungary, e-mail: szabados@math.bme.hu} \footnote{Research
supported by the French--Hungarian intergovernmental grant
``Balaton'' F-39/2000.} and Bal\'azs Sz\'ekely\footnote{Research
supported by the HSN laboratory of BUTE and by the Hungarian higher
education research program FKFP 0058/2001.} \\
Budapest University of Technology and Economics}

\date{}

\maketitle


\begin{abstract}

The aim of this paper is to investigate discrete approximations of the
exponential functional $\int_0^{\infty} \exp(B(t) - \nu t) \di t$ of
Brownian motion (which plays an important role in Asian options of
financial mathematics) by the help of simple, symmetric random
walks. In some applications the discrete model could be even more
natural than the continuous one. The properties of the discrete
exponential functional are rather different from the continuous one:
typically its distribution is singular w.r.t.  Lebesgue measure, all
of its positive integer moments are finite and they characterize the
distribution. On the other hand, using suitable random walk
approximations to Brownian motion, the resulting discrete exponential
functionals converge a.s. to the exponential functional of Brownian
motion, hence their limit distribution is the same as in the
continuous case, namely, the one of the reciprocal of a gamma random
variable, so absolutely continuous w.r.t. Lebesgue measure. This way
we give a new, elementary proof for an earlier result by Dufresne and
Yor as well.

\end{abstract}


\renewcommand{\thefootnote}{\alph{footnote}}
\footnotetext{ 2000 \emph{MSC.} Primary
 60F17, 60F25. Secondary 62P05, 60G50.}
\footnotetext{\emph{Key words and phrases.} Exponential
functional,  Asian option, random walk, strong approximation,
self-similarity, $L^p$ limit.}

\section{Introduction}

The geometric Brownian motion (originally introduced by the economist
P. Samuelson in 1965) plays a fundamental role in the Black--Scholes
theory of option pricing, modeling the price process of a stock. It
can be explicitly given in terms of Brownian motion (BM) $B$ as
\[
S(t) = S_0 \exp\left(\sigma B(t) + \left(\mu - \sigma^2/2 \right)t
\right), \qquad t \ge 0.
\]
In the case of Asian options one is interested in the average price
process
\[
A(t) = \frac{1}{t} \int_0^t S(u) \di u , \qquad t \ge 0.
\]
The following interesting result is true for the distribution of a
closely related, widely investigated exponential functional of BM:
\begin{equation} \label{eq:I}
\mathcal{I} = \int_0^{\infty} \exp(B(t) - \nu t) \di t \law
\frac{2}{Z_{2\nu}} \qquad (\nu > 0).
\end{equation}
Here $Z_{2\nu}$ is a gamma distributed random variable with index
$2\nu$ and parameter $1$, while $\law$ denotes equality in distribution.
This result was proved by \cite{Dufresne (1990)} using discrete
approximations with gamma distributed random variables and also by
\cite{Yor (1992)}, using rather ingenious stochastic analysis tools.
For more background information see \cite{Yor (2001)} and
\cite{Csorgo (1999)}.

As a consequence, the $p$th integer moment of $\mathcal{I}$ is finite
iff $p < 2\nu$ and
\begin{equation} \label{eq:Ip}
\ev(\mathcal{I}^p) = 2^p \frac{\Gamma(2\nu -p)}{\Gamma(2\nu)}.
\end{equation}
On the other hand, all negative integer moments, also given by
(\ref{eq:Ip}), are finite and they characterize the distribution of
$\mathcal{I}$.

The situation is much nicer when BM with negative drift is replaced
in the model by the negative of a subordinator $(\alpha_t, t\ge 0)$,
that is, by the negative of a non-decreasing process with stationary
and independent increments, starting from the origin. Then, as was
shown by \cite{CarPetYor (1997)}, all positive integer moments of
$\mathcal{J} = \int_0^{\infty} \exp(-\alpha_t) \di t$ are finite:
\begin{equation} \label{eq:Jp}
\ev(\mathcal{J}^p) = \frac{p!}{\Phi(1)\cdots \Phi(p)}, \qquad
\Phi(\lambda) = -\frac{1}{t} \log \ev(\exp(-\lambda \alpha_t)),
\end{equation}
and in this case the positive integer moments characterize the
distribution of $\mathcal{J}$.

To achieve a similar favorable situation in the BM case, at least in
an approximate sense, it is a natural idea to use a simple, symmetric
random walk (RW) as an approximation, with a large enough negative
drift. Besides, in some applications a discrete model could be more
natural than a continuous one. It seems important that, as we shall
see below, the discrete case is rather different from the continuous
case in many respects.

So let $(X_j)_{j=1}^{\infty}$ be an i.i.d. sequence with $\pr(X_1 =
\pm 1) = \frac12$ and $S_0=0$, $S_k = \sum_{j=1}^k X_j$ $(k \ge
1)$. Introduce the following approximation of $\mathcal{I}$:
\begin{equation}\label{eq:Y}
Y = \sum_{k=0}^{\infty} \exp(S_k - k \nu ) = \xis , \qquad
\xi_j = \exp(X_j - \nu) ,
\end{equation}
where $\nu > 0$. In this paper we investigate the properties of $Y$,
which will be called the discrete exponential functional of the
given RW, or shortly, the discrete exponential functional.

In Section 2 below it turns out that the distribution of $Y$ is
singular w.r.t. Lebesgue measure if $\nu >1$. Then in Section 3 we
find a formula, similar to (\ref{eq:Ip}), for all positive integer
moments of $Y$ when $\nu >1$, and, because $Y$ is bounded then, these
moments really characterize its distribution. Finally, in Section 4
we use a nested sequence of RWs to obtain a.s.
converging approximations of $\mathcal{I}$, and this way an
elementary proof of result (\ref{eq:I}) of Dufresne and Yor as well.

\section{The distribution of the discrete exponential functional}

Let us start with a natural generalization: $(\xi_j)^{\infty}_{j=1}$
be i.i.d., $\xi_j > 0$. Consider first the finite polynomial
\begin{eqnarray} \label{eq:Yn}
Y_n &=& \xin \\
&=& 1 + \xi_1(1 + \xi_2 + \xi_2 \xi_3 + \cdots + \xi_2 \cdots \xi_n)
\qquad (n \ge 1), \nonumber
\end{eqnarray}
$Y_0 = 1$. This implies the following equality in distribution:
\begin{equation} \label{eq:law}
Y_n \law 1 + \xi Y_{n-1},
\end{equation}
where $\xi \law \xi_1$, and $\xi$ is independent of $Y_{n-1}$. Since
$Y_n \nearrow Y = \xis$ a.s., we get the basic \emph{self-similarity} of
$Y$ in distribution:
\begin{equation} \label{eq:self}
Y \law 1 + \xi Y,
\end{equation}
where $\xi$ is independent of $Y$. We remark that infinite polynomials
similar to $Y$ were studied by \cite{Vervaat (1979)} and many others.
There some of the ideas discussed below have already appeared.

A standard application of the strong law of large numbers gives a
condition for having an a.s. finite limit $Y$ here, see Theorem 1 in
\cite{Szekely (1975)}. Namely, when $\ev(|\log \xi_j|) < \infty$,
one has $Y_n \nearrow Y < \infty$ a.s. if and only if $\ev(\log \xi_j) < 0$.

In the special case when $Y$ is defined as in (\ref{eq:Y}), but $S_n$
is the partial sum of an \emph{arbitrary} i.i.d. sequence
$(X_j)_{j=1}^{\infty}$ with zero expectation, $Y < \infty$ a.s. iff
the drift added is negative: $\nu > 0$. Hence this condition is
always assumed in our basic example (simple, symmetric RW).

Next we want to show that self-similarity (\ref{eq:self}) implies a
simple functional equation for the distribution function $F(y)=\pr(Y
\le y)$, $y \in \mathbb{R}$. For a modest generalization of our basic
case, let us introduce some notations. In (\ref{eq:Yn}) let $\xi_j$
take the positive values $\gamma_1 < \dots < \gamma_N$, and let
$p_i=\pr (\xi = \gamma_i)$. (In our basic case $N=2$, $\gamma_1 =
e^{-1-\nu}$, $\gamma_2 = e^{1-\nu}$, $p_1 = p_2 = \frac12$.) Consider
the following similarity transformations: $T_i(x) = \gamma_i x +1$
$(1 \le i \le N)$. When
\[
\ev(\log \xi) = \sum^{N}_{i=1} p_i \: \log \gamma_i < 0
\]
holds, by (\ref{eq:self}) we have $\pr(Y \le y) = \pr(1 + \xi Y \le y)
= \sum^N_{i=1} p_i \: \pr(1 + \gamma_i Y \le y | \xi = \gamma_i)
= \sum^N_{i=1} p_i \: \pr(1 + \gamma_i Y \le y) $. Thus one obtains
the following functional equation for the distribution function of $Y$:
\begin{equation} \label{eq:df}
F(y)= \sum^N_{i=1} p_i \: F(T^{-1}_{i}(y)).
\end{equation}

An important special case is when $\gamma_N < 1$ (in the basic case:
$\nu >1$). Then by (\ref{eq:Yn}), $Y$ is a bounded random variable.
Moreover, each mapping $T_i$ is a contraction, having a unique
fixpoint $y_i = (1 - \gamma_i)^{-1}$, $0 < y_1 < \cdots < y_N <
\infty$. Since each $T_i$ is an increasing function, $T_i(y_j) <
T_i(y_k)$ if $j < k$. Also, $T_j(y_i) < T_k(y_i)$ if $j < k$. Then it
follows that each $T_i$ maps the fundamental interval $I = [y_1, y_N]$
into itself. Clearly, $I$ contains the range of $Y$ too.

In the case $\gamma_N < 1$ it is useful to rephrase the given problem
in the language of fractal theory, see e.g. \cite{Falconer (1990)}.
Let us introduce the symbolic space $\Sigma = \{\ivect = (i_1, i_2,
\dots) : i_j = 1 , \dots , N \}$, endowed with the countable power of
the discrete measure $(p_1, \dots , p_N)$, denoted by $\pr$. By
(\ref{eq:Yn}),
\begin{equation} \label{eq:Yn2}
Y_n = 1+\gamma_{i_1}(1+\gamma_{i_2}(\cdots (1+\gamma_{i_n})))
= (T_{i_1} \circ T_{i_2} \circ \cdots \circ T_{i_n})(1),
\end{equation}
with probability $p_{i_1} p_{i_2} \cdots p_{i_n}$. Thus the canonical
projection $\Pi: \Sigma \to I$, $\Pi(\ivect) = \lim_{k \to \infty}
(T_{i_1} \circ \cdots \circ T_{i_k})(1) = \lim_{k \to \infty }
(1+\gamma_{i_1} + \cdots + \gamma_{i_1} \dots \gamma_{i_k})$ maps
$\Sigma$ onto the range of $Y$. The
\emph{attractor} $\Lambda$ of the iterated function scheme of
similarity transformations $(T_1, \dots , T_N)$ is defined as
\begin{equation} \label{eq:Lambda}
\Lambda = \bigcap_{k \ge 0} \bigcup_{1 \le i_1, \dots, i_k \le N}
(T_{i_1} \circ \cdots \circ T_{i_k})(I),
\qquad \Lambda = \bigcup_{i=1}^N T_i(\Lambda).
\end{equation}
Then $\Lambda$ is a non-empty, compact, self-similar set. In
(\ref{eq:Lambda}) the fundamental interval $I = [y_1, y_N]$ can be
replaced by any interval $J$ which is mapped into itself by each
$T_i$, e.g. by $J = [0, y_N] \ni 1$. Thus range$(Y) \subset \Lambda$,
cf. (\ref{eq:Yn2}). The converse is also true, since for any $y \in
\Lambda$ and for any $\epsilon > 0$ there is an $\ivect \in
\Sigma$ and a large enough $k$ such that $y \in (T_{i_1} \circ \cdots
\circ T_{i_k})(J)$ and the length $|(T_{i_1} \circ \cdots \circ
T_{i_k})(J)| = \gamma_{i_1} \cdots \gamma_{i_k} |J| < \epsilon$.
Hence, by (\ref{eq:Yn2}), $y \in $ range$(Y)$, that is, $\Lambda =
$range$(Y)$. Also, the distribution of $Y$ on the real line, which
will be denoted by $\pr_Y$, is simply $\pr \circ \Pi^{-1}$.

We are going to use the notations
\begin{equation} \label{eq:Ii1k}
I_{i_1 \dots i_k} = [y_{i_1 \dots i_k 1}, y_{i_1 \dots i_k N}] =
(T_{i_1} \circ \cdots \circ T_{i_k})(I),
\end{equation}
$y_{i_1 \dots i_k l} = (T_{i_1} \circ \cdots \circ T_{i_k})(y_l)$ $(l
= 1, \dots, N)$ as well, where $i_j = 1, \dots N$ and $y_l =
(1-\gamma_l)^{-1}$. The length of such an interval is $|I_{i_1 \dots
i_k}| = \gamma_{i_1} \cdots \gamma_{i_k} |I|$, where $|I| = y_N -
y_1$.

Returning to the distribution of $Y$ in the basic case, consider
first when the intervals $I_1 = T_1(I) = [y_{11}, y_{12}] = [y_1,
y_{12}]$ and $I_2 = T_2(I) = [y_{21}, y_{22}] = [y_{21}, y_2]$ do not
overlap, where $y_{12} = 1+\gamma_1(1-\gamma_2)^{-1}$, $y_{21} =
1+\gamma_2(1-\gamma_1)^{-1}$. Thus there is no overlap iff $y_{12} <
y_{21}$, i.e., $\nu > \log(e+e^{-1}) \approx 1.127$. Since $F(y)=0$
if $y < y_1$ and $F(y)=1$ if $y \ge y_2$, in this non-overlapping
case (\ref{eq:df}) simplifies as
\begin{equation}
F(y) = \left\{
\begin{array}{ll}
 \frac12 F(T^{-1}_{1}(y)) & \mbox{if } y \in [y_1, y_{12}), \\
 \frac12 & \mbox{if } y \in [y_{12}, y_{21}), \\
 \frac12 + \frac12 F(T^{-1}_{2}(y)) & \mbox{if } y \in [y_{21},
 y_2).
\end{array}
\right. \label{eq:F}
\end{equation}

By the similarities given by $T_1$ and $T_2$, applied to (\ref{eq:F}),
one obtains that $F$ has constant value $\frac14$ over the interval
$[y_{112}, y_{121})$ and constant value $\frac34$ on $[y_{212},
y_{221})$. Continuing this way by induction one gets that $F$ has
constant dyadic values over such \emph{plateau} intervals:
\begin{equation} \label{eq:plat}
F(y) = 2^{-k-1} + \sum^{k}_{j=1} (i_j - 1) 2^{-j}, \qquad y \in [y_{i_1
\dots i_k 12}, y_{i_1 \dots i_k 21}), \qquad i_j = 1, 2.
\end{equation}

The sum of the lengths of these plateaus is $|I| \left(1 - (\gamma_1 +
\gamma_2)\right) \left(1 + (\gamma_1+\gamma_2) \right.$ $\left.  +
(\gamma_1+\gamma_2)^2 \right.$ $\left.+ \cdots \right)$, so add up to
$|I|$.  Hence the attractor $\Lambda$ (the range of $Y$), i.e., the
set of points of increase of $F$, has zero Lebesgue measure. So it is
a Cantor-type set: an uncountable, perfect set of Lebesgue measure
zero.

The distribution function $F$ is clearly a continuous singular
function. For, if $y_0 \in \Lambda$ and $\epsilon > 0$ is given, take
$k$ so that $2^{-k} < \epsilon$. By the construction of $\Lambda$,
there exists an interval $I_{i_1 \dots i_k} \ni y_0$. Let the left
endpoint of the left neighbor plateau of $I_{i_1 \dots i_k}$ be
$\eta_1$ (or $-\infty$), and the right endpoint of the right neighbor
plateau be $\eta_2$ (or $\infty$). If $\delta = \min(y_0 - \eta_1,
\eta_2 - y_0) > 0$, then for any $y$ such that $|y - y_0| < \delta$
one has $|F(y) - F(y_0)| \le 2^{-k} < \epsilon$ by (\ref{eq:plat}).

It is not difficult to see, cf. \cite{Grincevicius (1974)}, that in
general, any solution of (\ref{eq:self}) has either absolutely
continuous or continuous singular distribution.

We mention that standard results of fractal theory, see Theorem 9.3 in
\cite{Falconer (1990)}, imply that the Hausdorff dimension $s$ of
$\Lambda$ equals the (unique) solution of the equation $\gamma_1^s +
\gamma_2^s = 1$. Solving this equation for $\nu$, we get $\nu = s^{-1}
\log(e^s + e^{-s})$. Hence the fractal dimension $s$ is a strictly
decreasing function of $\nu > \log(e + e^{-1})$, tending to $1$ as
$\nu \to \log(e + e^{-1})$ and converging to $0$ as $\nu \to \infty$.
Also, the Hausdorff measure of $\Lambda$ is $\mathcal{H}^s(\Lambda) =
|I|^s$, where the Hausdorff dimension $s$ is the one defined above.
It means that
\[
\mathcal{H}^s(\Lambda) = \left( \left(1 - e (2\cosh s)^{-1/s}
\right)^{-1} - \left(1 - e^{-1} (2\cosh s)^{-1/s} \right)^{-1}
\right)^s.
\]
Thus $\mathcal{H}^s(\Lambda) \to e^2 - e^{-2}$ as $\nu \to \log(e +
e^{-1})$ and $\mathcal{H}^s(\Lambda) \to 0$ as $\nu \to \infty$.

Next we are going to show that the distribution of $Y$ is singular
w.r.t. Lebesgue measure even in the overlapping case if $\nu > 1$.
Again, we consider the slight generalization introduced above. The
proof below is based on \cite{SiSoUr (2001)} and on personal
communication with K. Simon.

\begin{thm} \label{th:singular}
Let $\xi$ take the values $\gamma_i$ $(i = 1, \dots, N)$, $0 <
\gamma_1 < \dots < \gamma_N < 1$, and let $p_i=\pr (\xi =
\gamma_i)$. Take an i.i.d. sequence $(\xi_j)^{\infty}_{j=1}$, $\xi_j
\law \xi$.  Then the distribution of $Y=\xis$ is singular
w.r.t. Lebesgue measure, if
\[
-\chi_{\pr} = \ev(\log \xi) = \sum^{N}_{i=1} p_i \log \gamma_i
< \sum^{N}_{i=1} p_i \log p_i = -H_{\pr}.
\]
This will be called the entropy condition. Here $\chi_{\pr}$ is the
Lyapunov exponent of the iterated function scheme $(T_1, \dots, T_N)$
corresponding to the Bernoulli measure $\pr$.
\end{thm}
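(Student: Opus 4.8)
The plan is to show that under the entropy condition the measure $\pr_Y = \pr \circ \Pi^{-1}$ is carried by a set of zero Lebesgue measure. The standard device for this (following \cite{SiSoUr (2001)}) is to compare two numbers attached to the iterated function scheme: the \emph{local dimension} of $\pr_Y$, controlled by the entropy $H_{\pr} = -\sum p_i \log p_i$, and the contraction rate, controlled by the Lyapunov exponent $\chi_{\pr} = -\sum p_i \log\gamma_i$. If $H_{\pr} < \chi_{\pr}$, then the ``dimension'' of $\pr_Y$ is at most $H_{\pr}/\chi_{\pr} < 1$, which forces singularity. The work is to make this rigorous via a direct covering argument rather than invoking a dimension theorem as a black box.

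First I would fix a realization $\ivect = (i_1, i_2, \dots) \in \Sigma$ and look at the cylinder intervals $I_{i_1 \dots i_k}$ from (\ref{eq:Ii1k}), which have length $|I_{i_1\dots i_k}| = \gamma_{i_1}\cdots\gamma_{i_k}\,|I|$ and $\pr$-mass $p_{i_1}\cdots p_{i_k}$. Taking logarithms and applying the strong law of large numbers to the i.i.d.\ sequences $(\log \gamma_{i_j})_j$ and $(\log p_{i_j})_j$, for $\pr$-a.e.\ $\ivect$ one has
\begin{equation} \label{eq:SLLN}
\frac{1}{k}\log |I_{i_1 \dots i_k}| \to -\chi_{\pr}, \qquad
\frac{1}{k}\log \big(p_{i_1}\cdots p_{i_k}\big) \to -H_{\pr}.
\end{equation}
Fix $\alpha$ with $H_{\pr}/\chi_{\pr} < \alpha < 1$ (possible exactly because of the entropy condition). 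Then (\ref{eq:SLLN}) gives, for $\pr$-a.e.\ $\ivect$ and all large $k$, the comparison $p_{i_1}\cdots p_{i_k} \le |I_{i_1 \dots i_k}|^{\alpha}$, i.e.\ the $\pr_Y$-mass of the cylinder interval is much smaller than its length to a power $<1$. Let $A \subset \Sigma$ be the full-measure set where this holds eventually, and for each $m$ let $A_m$ be the subset where the bound holds for all $k \ge m$; then $\pr(A_m) \to 1$, and it suffices to show $\Pi(A_m)$ has zero Lebesgue measure for each fixed $m$, since $\pr_Y$ is then concentrated on the $\sigma$-compact Lebesgue-null set $\bigcup_m \Pi(A_m)$.

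To estimate $|\Pi(A_m)|$, cover it for each large $k$ by the cylinder intervals $I_{i_1\dots i_k}$ with $(i_1,\dots,i_k)$ extendible to an element of $A_m$, and bound $\sum' |I_{i_1\dots i_k}|$, the sum over this collection. Using $|I_{i_1\dots i_k}|^{1-\alpha} \le (\max_i \gamma_i)^{k(1-\alpha)}|I|^{1-\alpha} \to 0$ together with $|I_{i_1\dots i_k}| = |I_{i_1\dots i_k}|^{1-\alpha}\cdot|I_{i_1\dots i_k}|^{\alpha}$ and the mass bound $|I_{i_1\dots i_k}|^{\alpha} \ge p_{i_1}\cdots p_{i_k}$, we get
\begin{equation} \label{eq:cover}
\sideset{}{'}\sum |I_{i_1\dots i_k}|
\le (\max_i \gamma_i)^{k(1-\alpha)}\,|I|^{1-\alpha}
\sideset{}{'}\sum p_{i_1}\cdots p_{i_k}
\le (\max_i \gamma_i)^{k(1-\alpha)}\,|I|^{1-\alpha},
\end{equation}
since the $\pr$-masses of disjoint cylinders sum to at most $1$. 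The right side tends to $0$ as $k \to \infty$, so $\Pi(A_m)$ has outer Lebesgue measure zero. This proves singularity. The main obstacle is the bookkeeping in (\ref{eq:cover}): one must be careful that the inequality $p_{i_1}\cdots p_{i_k} \le |I_{i_1\dots i_k}|^\alpha$ holds \emph{uniformly} over the covering family (which is why one passes to $A_m$ and fixes $m$), and that overlaps among the cylinder intervals do no harm — they only make the true covering more efficient, so the upper bound in (\ref{eq:cover}) is still valid. Everything else is the two applications of the strong law of large numbers in (\ref{eq:SLLN}).
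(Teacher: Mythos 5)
Your route is genuinely different from the paper's. The paper proves that the upper symmetric derivative $\limsup_{r\searrow 0}\pr_Y(B(x,r))/\lambda(B(x,r))$ is infinite for $\pr_Y$-almost every $x$ (by comparing the $\pr_Y$-mass and the length of the cylinder $I_{i_1\dots i_k}$ containing $x$, the SLLN driving the ratio to infinity under the entropy condition), and then invokes the Lebesgue-a.e.\ existence and finiteness of the symmetric derivative (Rudin, Theorem 8.6) to conclude that this set has Lebesgue measure zero. You instead exhibit a Lebesgue-null carrier of $\pr_Y$ directly by a covering argument. Both strategies are standard; yours trades the appeal to the differentiation theorem for the bookkeeping with the sets $A_m$, and in outline it works.

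There is, however, a concrete error: the central inequality is stated in the wrong direction, and as stated it is false and would destroy the covering estimate. With $\alpha\in(H_{\pr}/\chi_{\pr},1)$ one has $\alpha\chi_{\pr}>H_{\pr}$, hence $-\alpha\chi_{\pr}<-H_{\pr}$; since $k^{-1}\log|I_{i_1\dots i_k}|\to-\chi_{\pr}$ and $k^{-1}\log(p_{i_1}\cdots p_{i_k})\to-H_{\pr}$, the SLLN yields, for $\pr$-a.e.\ $\ivect$ and all large $k$,
\[
|I_{i_1\dots i_k}|^{\alpha}\;\le\;p_{i_1}\cdots p_{i_k},
\]
i.e.\ the cylinder's mass is \emph{large} compared with its length to the power $\alpha<1$ (local dimension at most $\alpha<1$, which is what singularity should feel like), not small as you assert twice (first ``$p_{i_1}\cdots p_{i_k}\le|I_{i_1\dots i_k}|^{\alpha}$'' and then ``the mass bound $|I_{i_1\dots i_k}|^{\alpha}\ge p_{i_1}\cdots p_{i_k}$''). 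Moreover, your own chain of inequalities requires precisely the reversed bound: to pass from $\sum' |I_{i_1\dots i_k}|^{1-\alpha}\,|I_{i_1\dots i_k}|^{\alpha}$ to $(\max_i\gamma_i)^{k(1-\alpha)}|I|^{1-\alpha}\sum' p_{i_1}\cdots p_{i_k}$ with a ``$\le$'' you must bound $|I_{i_1\dots i_k}|^{\alpha}$ \emph{above} by $p_{i_1}\cdots p_{i_k}$; with the bound as you state it the inequality goes the other way and the estimate collapses. Fortunately the direction that is actually true is the one the argument needs, so after reversing the inequality (and redefining $A_m$ accordingly) the proof is sound: the cover of $\Pi(A_m)$ by admissible cylinders of generation $k\ge m$ has total length at most $(\max_i\gamma_i)^{k(1-\alpha)}|I|^{1-\alpha}\to0$, and $\pr_Y$ is carried by the Lebesgue-null set $\bigcup_m\Pi(A_m)$.
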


\begin{proof}
We are going to use the fractal theoretical approach and notations
introduced above.

We want to show that
\begin{equation} \label{eq:DPY}
(\bar{D}\pr_Y)(x) = \limsup_{r \searrow 0}
\frac{\pr_Y(B(x,r))}{\lambda(B(x,r))} = \infty \qquad \pr_Y \mbox{a.s.},
\end{equation}
where $B(x,r)$ denotes the open ball (in the real line) with center
at $x$ and radius $r$ and $\lambda$ is Lebesgue measure. The
statement of the theorem easily follows from this. For, take the set
$E = \{x \in I : (\bar{D}\pr_Y)(x) = \infty \}$. Then (\ref{eq:DPY})
implies that $\pr_Y(E) = 1$, while e.g. Theorem 8.6 in \cite{Rudin
(1970)} shows that the symmetric derivative $D\pr_Y$ exists and is
finite $\lambda$ a.e., so $\lambda(E) = 0$.

Introduce the notation $\anum{j} = \# \{l : i_l=j, 1 \le l \le k \}$.
Thus
\[
\Pi(\ivect) = 1 + \sum_{k=1}^{\infty} \prod^N_{j=1} \gamma_j^{\anum{j}}.
\]
By the SLLN, the set $A_j = \left\{ \ivect \in \Sigma : k^{-1} \anum{j}
\to p_j \right\}$ has probability $1$ for every $j = 1, \dots , N$ and
so has $A = \bigcap_{j=1}^N A_j$. Let $C = \{x \in I : \Pi^{-1}(x) \cap
A \ne \emptyset \}$. Then $\pr_Y(C)=1$.

If $x \in C$, there exists $\ivect \in A$ such that $\Pi(\ivect) = x$
and $k^{-1} \anum{j} \to p_j$ as $k \to \infty$ for all $j = 1, \dots
, N$. Fix such an $\ivect$ and $x$. Let $r_k$ be the smallest radius
such that $B(x,r_k) \supset I_{i_1 \dots i_k}$, where $ \ivect = (i_1,
\dots, i_k, \dots)$ and $I_{i_1 \dots i_k}$ is defined by
(\ref{eq:Ii1k}).

The following facts are clear: (a) $x \in \Lambda$, moreover, $x \in
I_{i_1 \dots i_k}$, see (\ref{eq:Lambda}) and (\ref{eq:Ii1k}); (b)
$\frac12 |I_{i_1 \dots i_k}| < r_k \leq c |I_{i_1\dots i_k}|$, where
$c > 1$ is arbitrary; (c) $|I_{i_1 \dots i_k}| = |I| \prod^N_{j=1}
\gamma_j^{\anum{j}}$; (d) $\pr_Y(B(x,r_k)) \ge \pr_Y(I_{i_1 \dots i_k})
= \pr(i_1, \dots, i_k) = \prod^N_{j=1} p_j^{\anum{j}}$. Using these
facts it follows for any $k \ge 1$ that
\begin{eqnarray*}
\frac{\pr_Y(B(x,r_k))}{\lambda(B(x,r_k))} \ge (2 c |I|)^{-1} \left(
\frac{\prod^N_{j=1} p_j^{k^{-1} \anum{j}}} {\prod^N_{j=1}
\gamma_j^{k^{-1} \anum{j}}} \right)^k .
\end{eqnarray*}

By our assumptions concerning $x$ and $\ivect$, the ratio on the right
hand side converges to $\left(p_1^{p_1} \cdots p_N^{p_N}\right) /
\left(\gamma_1^{p_1} \cdots \gamma_N^{p_N}\right)$ as $k \to
\infty$. The entropy condition of the theorem implies that this latter
ratio is larger than $1$. Hence (\ref{eq:DPY}) holds, and this
completes the proof.
\end{proof}

Returning to our basic case, consider the entropy condition when
$\gamma_1=e^{-1-\nu}$, $\gamma_2=e^{1-\nu}$ and
$p_1=p_2=\frac{1}{2}$. The condition holds iff $\nu > \log 2 \approx
0.693$, since this is equivalent to $\frac12 (-1-\nu) + \frac12
(1-\nu) < \frac12 \log \frac12 + \frac12 \log \frac12$. Combining
this with the condition $\gamma_2 < 1$, this means that the
distribution of $Y$ is singular w.r.t. Lebesgue measure for any $\nu
> 1$.

Characterization of the distribution of $Y$ when $0 < \nu \le 1$
remains open. In that case one of the two similarity mappings, $T_2$,
is not a contraction anymore, and that situation requires more
sophisticated tools than the ones above.

\section{The moments of the discrete exponential functional}

Let us consider first the general case: $(\xi_j)^{\infty}_{j=1}$
i.i.d., $\xi_j > 0$, as at the beginning of the previous section. Now
we turn our attention to the moments of $Y = \xis$. If $Y_n$ is
defined by (\ref{eq:Yn}), the equality in law (\ref{eq:law}) implies
\begin{equation} \label{eq:EYn}
\ev(Y_n^p) = \ev\left((1 + \xi Y_{n-1})^p\right) = \sum^p_{k=0}
\binom{p}{k} \mu_k \ev(Y_{n-1}^k),
\end{equation}
where $p \ge 0$ integer and $\mu_k = \ev(\xi^k)$. As $n \to \infty$,
by monotone convergence we obtain
\begin{equation} \label{eq:EY}
\ev(Y^p) = \sum^p_{k=0} \binom{p}{k} \mu_k \ev(Y^k).
\end{equation}
In (\ref{eq:EYn}) and (\ref{eq:EY}) both sides are either finite
positive, or $+\infty$.

\begin{thm} \label{th:moments}
Let $(\xi_j)^{\infty}_{j=1}$ be an i.i.d. sequence, $\xi_j > 0$, and
$Y = \xis$. For $p \ge 1$ real, $\ev(Y^p) < \infty$ if and only if
$\mu_p = \ev(\xi^p) < 1$. Then $\mu_q < 1$ for any $1 \le q \le p$ and
$\ev(Y^p) \le (1 - \mu_p^{1/p})^{-p}$ as well. In this case if $p \ge 1$
is an integer, we also have the recursion formula
\begin{equation}\label{eq:moments}
\ev(Y^p) = \frac{1}{1-\mu_p} \sum_{k=0}^{p-1} \binom{p}{k} \mu_k \ev(Y^k).
\end{equation}
\end{thm}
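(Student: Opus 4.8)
The plan is to prove the three assertions --- the iff for finiteness, monotonicity of $\mu_q$ in the exponent, and the recursion with the upper bound --- in a convenient order, exploiting the self-similarity relation $Y \law 1 + \xi Y$ with $\xi$ independent of $Y$, together with equations (\ref{eq:EYn}) and (\ref{eq:EY}).

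First I would dispose of the easy algebraic facts. The monotonicity statement ``$\mu_p < 1 \Rightarrow \mu_q < 1$ for $1 \le q \le p$'' is just Lyapunov's inequality (or Jensen applied to $t \mapsto t^{p/q}$): $\mu_q = \ev(\xi^q) \le (\ev(\xi^p))^{q/p} = \mu_p^{q/p} < 1$. Next, the recursion formula (\ref{eq:moments}) for integer $p$ is immediate from (\ref{eq:EY}): split off the $k = p$ term, $\ev(Y^p) = \mu_p \ev(Y^p) + \sum_{k=0}^{p-1}\binom{p}{k}\mu_k\ev(Y^k)$, and solve for $\ev(Y^p)$ --- this is legitimate precisely once we know $\ev(Y^p) < \infty$ and $\mu_p < 1$, so it belongs logically after the finiteness equivalence.

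The heart of the matter is the equivalence $\ev(Y^p) < \infty \iff \mu_p < 1$ for real $p \ge 1$, and here I expect the main obstacle. For the ``only if'' direction: from $Y \law 1 + \xi Y \ge \xi Y$ with $\xi \perp Y$ and $Y \ge 1$, one gets $\ev(Y^p) \ge \ev(\xi^p)\ev(Y^p) = \mu_p \ev(Y^p)$; since $\ev(Y^p) \ge 1 > 0$, finiteness forces $\mu_p \le 1$, and a little more care (using that $Y = 1 + \xi Y$ strictly, so $Y > \xi Y$) rules out $\mu_p = 1$ --- if $\mu_p = 1$ then $\ev(Y^p) \ge \ev((1+\xi Y)^p) > \ev((\xi Y)^p) = \ev(Y^p)$ when $\ev(Y^p)$ is finite and positive, a contradiction. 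For the ``if'' direction, assume $\mu_p < 1$ and show $\ev(Y^p) < \infty$ together with the bound $\ev(Y^p) \le (1 - \mu_p^{1/p})^{-p}$. The cleanest route is via the $L^p$-norm and Minkowski's inequality on the partial sums: writing $Y_n = \sum_{k=0}^n \prod_{j=1}^k \xi_j$ with $\prod_{j=1}^0 := 1$, each term has $L^p$-norm $\|\prod_{j=1}^k \xi_j\|_p = \prod_{j=1}^k \|\xi_j\|_p = \mu_p^{k/p}$ by independence, so $\|Y_n\|_p \le \sum_{k=0}^n \mu_p^{k/p} \le (1 - \mu_p^{1/p})^{-1}$, using $\mu_p^{1/p} < 1$. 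By monotone convergence $Y_n \nearrow Y$ a.s., hence $\|Y\|_p = \lim \|Y_n\|_p \le (1 - \mu_p^{1/p})^{-1} < \infty$, which gives both $\ev(Y^p) < \infty$ and the stated inequality after raising to the $p$th power. (Note this argument needs $p \ge 1$ for Minkowski; for $0 < p < 1$ the claim as stated would use a different inequality, but the theorem only asserts $p \ge 1$.)

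Finally I would assemble the pieces: given the equivalence, for integer $p$ with $\mu_p < 1$ we already know $\ev(Y^k) < \infty$ for all $k \le p$ (by monotonicity of $\mu$ and the equivalence applied at each integer $\le p$, noting $\ev(Y^0) = 1$ and $\mu_0 = 1$ cause no trouble since $k = p$ is the only term removed), so every term on the right of (\ref{eq:moments}) is finite and the division by $1 - \mu_p > 0$ is valid. The only subtlety to flag is the borderline bookkeeping at $\mu_p = 1$ and the strictness in the ``only if'' direction; everything else is a short application of Jensen, Minkowski, and monotone convergence.
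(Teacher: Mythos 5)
Your proposal is correct and follows essentially the same route as the paper: the finiteness and the bound $\ev(Y^p)\le(1-\mu_p^{1/p})^{-p}$ come from Minkowski's inequality applied to the increments $Y_j-Y_{j-1}=\xi_1\cdots\xi_j$ (whose $L^p$-norms are $\mu_p^{j/p}$), the monotonicity of $\mu_q$ from Jensen/H\"older, and the recursion by isolating the $k=p$ term in (\ref{eq:EY}). The only minor difference is in the necessity direction, where the paper iterates $(1+x)^p\ge 1+x^p$ on the partial sums $Y_n$ to show $\ev(Y_n^p)\ge\sum_{j=0}^n\mu_p^j\to\infty$ when $\mu_p\ge 1$, while you argue by contradiction from the self-similarity $Y\law 1+\xi Y$ of the limit; both arguments are valid.
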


\begin{proof}
First, (\ref{eq:law}) and the simple inequality $(1 + x)^p \ge 1 +
x^p$ ($x \ge 0$, $p \ge 1$, real) imply that $\ev(Y_n^p)
\ge 1 + \mu_p \ev(Y_{n-1}^p)$. Suppose that $\mu_p \ge 1$. Since
$\ev(Y_0^p) = 1$, taking limit as $n \to \infty$, one gets that
$\ev(Y^p) = \infty$.

Conversely, suppose that $\mu_p < 1$. Then by H\"older's (or by
Jensen's) inequality, $\mu_q \le \mu_p^{q/p} < 1$ for any $1 \le q \le
p$ as well. We want to show that $\ev(Y^p)$ is finite. Let us begin
by observing that $\ev\left((Y_j - Y_{j-1})^p \right) =
\ev\left((\xi_1 \cdots \xi_j)^p \right) = \mu_p^j$ ($j \ge 1$). Hence
by the triangle inequality and $Y_0 = 1$ we get that
\[
\ev(Y_n^p)^{1/p}  \le 1 + \sum_{j=1}^n \ev\left((Y_j - Y_{j-1})^p
\right)^{1/p} = \sum_{j=0}^n \mu_p^{j/p} < \frac{1}{1 - \mu_p^{1/p}} ,
\]
for any $n \ge 1$ when $\mu_p < 1$. Taking limit as $n \to \infty$,
it follows that $\ev(Y^p) \le (1 - \mu_p^{1/p})^{-p} < \infty$.

Finally, the recursion formula (\ref{eq:moments}) directly follows
from (\ref{eq:EY}) when $\mu_p < 1$, $p \ge 1$ integer.
\end{proof}

For integer $p \ge 1$ it follows from (\ref{eq:moments}) by
induction that $\ev(Y^p)$ is a rational function of the moments
$\mu_1, \dots ,$ $\mu_p$:
\begin{eqnarray} \label{eq:fraction}
\ev(Y^p) = \frac{1}{(1-\mu_1) \cdots (1-\mu_p)} \sum_{(j_1, \dots ,
j_{p-1}) \in \{0, 1\}^{p-1}} a_{j_1, \dots , j_{p-1}}^{(p)}
\mu_1^{j_1} \cdots \mu_{p-1}^{j_{p-1}} ,
\end{eqnarray}
where the coefficients of the numerator are \emph{universal} constants,
independent of the distribution of $\xi_j$.

These universal coefficients $a_{j_1, \dots , j_{p-1}}^{(p)}$ in the
numerator of (\ref{eq:fraction}) make a symmetrical, Pascal's
triangle-like table if they are ordered according to multiindices
$(j_{p-1}, \dots , j_1)$ as binary numbers, see the rows $p = 1,
\dots , 5$:
\[
\begin{array}{ccccccccccccccccc}
&&&&&&&&1&&&&&&&& \\
&&&&&&&1&&1&&&&&&& \\
&&&&&&1&2&&2&1&&&&&& \\
&&&&1&3&5&3&&3&5&3&1&&&& \\
1&4&9&6&9&16&11&4&&4&11&16&9&6&9&4&1 \\
\end{array}
\]
Rather interestingly, based on the above table, one may conjecture
that each coefficient $a_{j_1, \dots , j_{p-1}}^{(p)}$ is equal to
the number of permutations $\pi$ of the set $\{1, \dots , p\}$
which have descent $\pi(i) > \pi(i+1)$ exactly where $j_i=1$, $1
\le i \le p-1$. Another interesting conjecture is that the
following recursion holds:
\[
a_{i_1, \dots , i_p}^{(p+1)} = \sum_{(j_1, \dots , j_{p-1})
\in S(i_1, \dots , i_p)} a_{j_1, \dots , j_{p-1}}^{(p)},
\qquad (i_1, \dots , i_p) \in \{0, 1\}^p, \qquad p \ge 1,
\]
where $S(i_1, \dots , i_p)$ is the set of all distinct binary
sequences obtained from $(i_1, \dots , i_p)$ by deleting exactly
one digit; $a^{(1)} = 1$. For example, $a_{0110}^{(5)} = 11 =
a_{110}^{(4)} + a_{010}^{(4)} + a_{011}^{(4)}$. This recursion
would imply that the above table contains only positive integers
and has the symmetries $a_{i_1, \dots , i_p}^{(p+1)} = a_{i_p,
\dots , i_1}^{(p+1)} = a_{1-i_1, \dots , 1-i_p}^{(p+1)} =
a_{1-i_p, \dots , 1-i_1}^{(p+1)}$.

There is a nice analogy between the moments of the exponential
functional of a subordinator and the moments of $Y$, compare
(\ref{eq:Jp}) and (\ref{eq:fraction}). First, the sum of the
coefficients in the numerator of (\ref{eq:fraction}) is $p!$, as
can be seen by induction. For, if one explicitly writes down
$\ev(Y^p)$, based on the recursion (\ref{eq:moments}), taking a
common denominator, the numerator of each earlier term except the
last one is multiplied by factors $1 - \mu_k$. In the sum of the
coefficients of the numerator it means a multiplication by zero.
On the other hand, in the last term one multiplies the numerator
of $\ev(Y^{p-1})$ by $p \mu_{p-1}$, which results the sum $p!$ of
the coefficients by the induction.

Second, there is a relationship between the denominators of
(\ref{eq:Jp}) and (\ref{eq:fraction}) as well. In the special case
when $Y$ is defined as in (\ref{eq:Y}), but $S_n$ is the partial
sum of an arbitrary i.i.d. sequence $(X_j)_{j=1}^{\infty}$ with zero
expectation, $\Phi(\lambda) = -n^{-1} \log
\ev\left(\exp(\lambda(S_n - \nu n))\right)
= -\log \ev(\xi^{\lambda})$, so $\Phi(k) = -\log \mu_k$, corresponding
to the factors in the denominator of (\ref{eq:Jp}). The factors
$1 - \mu_k$ in the denominator of (\ref{eq:fraction}) are tangents
to these.

Finally, let us consider the moments of $Y$ in our \emph{basic case}.
Then $\mu_k = \ev(\xi^k) = \exp(-k\nu) \cosh(k)$. Since $\cosh(k) <
e^k$ for any $k > 0$, it follows that $\mu_k < 1$ for any $k \ge 1$
when $\nu \ge 1$, therefore all positive integer moments of $Y$ are
finite in this case by Theorem \ref{th:moments}. In particular, in
Section 2 we saw that $Y$ is a bounded random variable if $\nu > 1$,
hence the positive integer moments characterize its distribution. On
the other hand, when $0 < \nu < 1$, only finitely many moments of $Y$
are finite. For, $\mu_k \ge 1$ if $0 < \nu \le k^{-1} \log \cosh(k)
\nearrow 1$ as $k \to \infty$. For example, even $\mu_1 \ge 1$ (and
consequently all $\ev(Y^p) = \infty$) if $0 < \nu < \log \cosh(1)
\approx 0.43378$.

\section{Approximation of the exponential functional of BM}

In this final section we are going to show that taking a suitable
nested sequence of RWs the resulting sequence of discrete exponential
functionals (\ref{eq:Y}) converges almost surely to the corresponding
exponential functional $\mathcal{I}$ of BM. Based on this, using
convergence of moments, we will give an elementary proof of
theorem (\ref{eq:I}) of Dufresne and Yor.

The underlying RW construction of BM was first introduced by
\cite{Knight (1962)}, and simplified and somewhat improved by
\cite{Revesz (1990)} and \cite{Szabados (1996)}. This construction
starting from an independent sequence of RWs $(S_m(k), k \ge 0)
_{m=0}^{\infty}$, constructs a dependent sequence of RWs
$(\widetilde{S}_m(k), k \ge 0)_{m=0}^{\infty}$ by "twisting" so that
the shrunken and linearly interpolated sequence $(B_m(t)
= 2^{-m}$ $\widetilde{S}_m(t2^{2m}), t \ge 0)_{m=0}^{\infty}$ a.s.
converges to BM $\left(B(t), t \ge 0\right)$, uniformly on bounded
intervals, see Theorem 3 in \cite{Szabados (1996)}.

We need one more result about this approximation here. This is stated
in a somewhat sharper form than in the cited reference, but can be
read easily from the proof there. Namely, see Lemma 4 in
\cite{Szabados (1996)}, for almost every $\omega$ there exists an
$m_0(\omega)$ such that for any $m \ge m_0(\omega)$ and for any $K
\ge e$, one has
\begin {equation} \label{eq:Bmj}
\sup_{j \ge 1} \sup_{0 \le t \le K} |B_{m+j}(t) - B_m(t)|
\le K^{\frac14} (\log K)^{\frac34} m 2^{-\frac{m}{2}} .
\end{equation}

\begin{lem} \label{le:asconv}
Let $B_m(t) = 2^{-m} \widetilde{S}_m(t2^{2m})$, $t \ge 0$, $m \ge 0$,
be a sequence of shrunken simple symmetric RWs that a.s. converges to
BM $(B(t), t \ge 0)$, uniformly on bounded intervals. Then for any
$\nu >0$, as $m \to \infty$,
\begin{eqnarray*}
&& Y_m = 2^{-2m} \sum_{k=0}^{\infty} \exp \left(2^{-m} \widetilde{S}_m(k)
- \nu k2^{-2m}\right) \\
& \to & \mathcal{I} = \int_0^\infty \exp\left(B(t)
- \nu t \right) \di t  < \infty \qquad \mbox{a.s.}
\end{eqnarray*}
\end{lem}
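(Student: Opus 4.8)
The plan is to show that $Y_m$ is a Riemann-type sum approximating the integral $\mathcal{I}$, and to control the error using the uniform convergence estimate (\ref{eq:Bmj}). First I would rewrite $Y_m$ as an integral: since $\widetilde{S}_m(k)$ is constant on $[k, k+1)$, putting $t = k2^{-2m}$ shows that $Y_m = \int_0^\infty \exp\left(B_m(\lfloor t 2^{2m}\rfloor 2^{-2m}) - \nu \lfloor t 2^{2m}\rfloor 2^{-2m}\right) \di t$, i.e. $Y_m = \int_0^\infty \exp(B_m(\phi_m(t)) - \nu \phi_m(t))\di t$ where $\phi_m(t) = \lfloor t2^{2m}\rfloor 2^{-2m}$ satisfies $0 \le t - \phi_m(t) \le 2^{-2m}$. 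The target $\mathcal{I}$ is the same integral with $B_m$ replaced by $B$ and $\phi_m(t)$ by $t$.

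The next step is to split the difference $Y_m - \mathcal{I}$ into two parts: the error from replacing $B_m$ by $B$ and the error from the time-discretization $\phi_m(t) \to t$. For the first, fix $\omega$ outside the null set and recall that $B_m \to B$ uniformly on bounded intervals; more quantitatively, (\ref{eq:Bmj}) (applied with $j \to \infty$, or directly to $|B_m - B|$) gives, for $m \ge m_0(\omega)$ and any $K \ge e$, a bound $\sup_{0 \le t \le K}|B_m(t) - B(t)| \le K^{1/4}(\log K)^{3/4} m 2^{-m/2} =: \epsilon_m(K)$. For the second, continuity of $B$ (which is a.s. uniformly continuous on $[0,K]$) together with $|t - \phi_m(t)| \le 2^{-2m}$ makes $|B(\phi_m(t)) - B(t)|$ and $\nu|t - \phi_m(t)|$ uniformly small on $[0,K]$. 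So on any fixed bounded interval $[0,K]$ the integrands converge uniformly, hence $\int_0^K \exp(B_m(\phi_m(t)) - \nu\phi_m(t))\di t \to \int_0^K \exp(B(t) - \nu t)\di t$.

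The main obstacle is the tail: one must show $\int_K^\infty \exp(B_m(\phi_m(t)) - \nu\phi_m(t))\di t$ is small, uniformly in $m$, for large $K$. Here I would use the strong law of large numbers for the RW $\widetilde{S}_m$ — but since the $m$ vary this needs care, and the clean route is to go back through $B_m \to B$ and the negative drift $-\nu t$. Concretely: by (\ref{eq:Bmj}) and the a.s. bound on $B$ itself (e.g. $|B(t)| = o(t)$ as $t\to\infty$, or the law of the iterated logarithm), there is an a.s.\ finite random constant $C(\omega)$ and a threshold $t_0(\omega)$ such that for all $m \ge m_0(\omega)$ and all $t \ge t_0(\omega)$ one has $B_m(\phi_m(t)) \le \frac{\nu}{2} t$; the rough estimate (\ref{eq:Bmj}) with $K = t$ shows the perturbation $|B_m(\phi_m(t)) - B(t)|$ grows only like $t^{1/4}(\log t)^{3/4}$, which is dominated by $\frac{\nu}{2}t$ for large $t$. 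Then $\exp(B_m(\phi_m(t)) - \nu\phi_m(t)) \le e^{\nu 2^{-2m}}\exp(-\frac{\nu}{2}t) \le 2\exp(-\frac{\nu}{2}t)$ for $t \ge t_0$, so $\int_K^\infty \le \frac{4}{\nu}\exp(-\frac{\nu}{2}K)$ uniformly in $m \ge m_0(\omega)$, and likewise $\int_K^\infty \exp(B(t)-\nu t)\di t \le \frac{4}{\nu}e^{-\nu K/2}$, which in particular gives $\mathcal{I} < \infty$ a.s. Finally, combining: given $\delta > 0$, choose $K$ so both tails are $< \delta/3$, then choose $m$ large so the $[0,K]$-part differs by $< \delta/3$; this yields $|Y_m - \mathcal{I}| < \delta$ for all large $m$, proving $Y_m \to \mathcal{I}$ a.s. The delicate point throughout is that the threshold $K$ beyond which the tail is negligible must be chosen independently of $m$, which is exactly what the uniform-in-$j$ form of (\ref{eq:Bmj}) is designed to deliver.
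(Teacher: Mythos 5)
Your proposal is correct and follows essentially the same route as the paper: the uniform-in-$m$ estimate (\ref{eq:Bmj}) combined with the law of the iterated logarithm to dominate the integrand by $e^{-\nu t/2}$ in the tail (uniformly in $m \ge m_0(\omega)$), uniform convergence on compacts for the main part, and a Riemann-sum comparison to pass between the sum $Y_m$ and the integral. The paper packages the tail control as an explicit dominating function $g(t,\omega)$ and invokes dominated convergence, while you phrase it as an $\epsilon/3$ argument with a step-function integral, but the ingredients and the key point — choosing the tail cutoff $K$ independently of $m$ — are identical.
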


\begin{proof}
The basic idea of the proof is that the sequence of functions $f_m(t,
\omega) = \exp\left(B_m(t) - \nu t\right)$, converges to $f(t, \omega)
= \exp\left(B(t) - \nu t \right)$ for $t \in [0, \infty)$ as $m \to
\infty$, for almost every $\omega$. If one can find a function
$g(t, \omega) \in L^1[0, \infty)$, that dominates each $f_m$ for
$m \ge m_0(\omega)$, then their integrals on $[0, \infty)$ also
converge to the integral of $f$, and then we are practically done.

First, by (\ref{eq:Bmj}), for a.e. $\omega$ there exists an $m_0 =
m_0(\omega)$ so that for any $K \ge e$,
\begin {equation} \label{eq:Bm0}
\sup_{m \ge m_0} \sup_{0 \le t \le K} |B_m(t) - B_{m_0}(t)|
\le K^{\frac14} (\log K)^{\frac34} \le K^{\frac12} \log K ,
\end{equation}
where we supposed that $m_0$ was chosen large enough so that $m_0
2^{-m_0/2} \le 1$.

Second, by the law of iterated logarithms,
\[
\limsup_{t \to \infty} \frac{B_{m_0}(t)}{(2t \log\log t)^{\frac12}} =
\limsup_{u \to \infty} \frac{\widetilde{S}_{m_0}(u)}{(2u \log\log u)
^{\frac12}} = 1 \qquad \mbox{a.s.},
\]
where $u = t 2^{2m_0}$. Hence for a.e. $\omega$, there is a $K_0 =
K_0(\nu, \omega)$, such that for any $t \ge K_0$,
\begin{equation} \label{eq:LIL}
B_{m_0}(t) \le 2 (t \log\log t)^{\frac12} \le 2 t^{\frac12} \log t ,
\end{equation}
where $K_0$ is chosen so large that $3 t^{\frac12} \log t \le \nu
t/2$ for any $t \ge K_0$.

Since a.s. any path of $B_{m_0}$ is continuous, it is bounded on the
interval $[0, K_0]$. Then by (\ref{eq:Bm0}), we have an upper bound
uniform in $m$: for any $m \ge m_0$ and $t \in [0, K_0]$, $B_m(t) \le
M(\omega)$.  On the other hand, when $t > K_0$, by (\ref{eq:LIL}),
$B_{m_0}(t) \le 2 t^{\frac12} \log t$ and so by (\ref{eq:Bm0}),
$B_m(t) \le 3 t^{\frac12} \log t$, for any $m \ge m_0$.

Summarizing, the function
\[
g(t, \omega) = \left\{
\begin{array}{ll}
e^{M(\omega)} & \mbox{if $0 \le t \le K_0(\nu, \omega)$,} \\
e^{-\nu t/2}  & \mbox{if $t > K_0(\nu, \omega)$,}
\end{array}
\right.
\]
is an integrable function on $[0, \infty)$, dominating
$\exp(B_m(t) - \nu t)$ for each $m \ge m_0(\omega)$. This implies
that
\[
\lim_{m \to \infty} \int_0^\infty \exp\left(B_m(t) - \nu t \right)
\di t = \int_0^\infty \exp\left(B(t) - \nu t \right) \di t < \infty
\qquad \mbox{a.s.}
\]

Finally, compare $\int_0^\infty \exp\left(B_m(t) - \nu t \right)
\di t$ to $Y_m = 2^{-2m} \sum_{k=0}^{\infty} \exp \left(B_m(k 2^{-2m})
\right.$ $ \left. - \nu k2^{-2m}\right)$ that appears in the
statement of the lemma. Applying the uniform domination of
$\exp\left(B_m(t) - \nu t \right)$ by the function $g$ shown
above, both the tail of the integral on the interval $[K_0,
\infty)$ and the tail of the sum for $k \ge \lceil K_0 2^{2m}
\rceil$ is smaller than $\int^{\infty}_{K_0} \exp (-\nu t/2) \di
t$, thus their difference is uniformly arbitrarily small for any
$m \ge m_0$ if $K_0$ is large enough. On the interval $[0, K_0]$
the difference of the integral and the sum (which is a Riemann sum
of a continuous function) tends to zero uniformly as $m \to
\infty$, since on each subinterval of length $2^{-2m}$, the
difference of $B_m(t)$ and $B_m(k 2^{-2m})$ is at most $2^{-m}$.
This completes the proof of the lemma.
\end{proof}

Next we want to apply the results of the previous sections to $Y_m$.
To do this we introduce the following notations. For $m \ge 0$ and $n
\ge 1$ let
\begin{eqnarray}
Y_{m,n} &=& 2^{-2m} \sum_{k=0}^n \exp \left(2^{-m} \widetilde{S}_m(k)
- \nu k2^{-2m} \right) \nonumber \\
&=& 2^{-2m} \left(1 + \xi_{m1} + \xi_{m1} \xi_{m2} + \xi_{m1} \cdots
\xi_{mn} \right) \label{eq:Ymn}
\end{eqnarray}
and $Y_{m,0} = 2^{-2m}$, where $\xi_{mj}=\exp (2^{-m}
\widetilde{X}_m(j)-\nu 2^{-2m})$. Here
$\widetilde{X}_m(j) = \widetilde{S}_m(j) - \widetilde{S}_m(j-1)$
$(j=1,2,\dots)$ is an i.i.d. sequence, $\pr(\widetilde{X}_m(j) =
\pm 1)= \frac12$.

Then $Y_{m,n} \nearrow Y_m$ as $n \to \infty$, $Y_m < \infty$ a.s.
iff $\nu > 0$, and $Y_m$ satisfies the following self-similarity in
distribution:
\begin{equation} \label{eq:Ymself}
Y_m \law 2^{-2m} + \xi_m Y_m \qquad \mbox{or} \qquad
2^{2m} Y_m \law 1 + \xi_m 2^{2m} Y_m ,
\end{equation}
where $\xi_m$ and $Y_m$ are independent, $\xi_m \law \xi_{mj}$.
Using the notations of Section 2, now $\gamma_1 = \exp(-2^{-m} -
\nu 2^{-2m})$, $\gamma_2 = \exp(2^{-m} - \nu 2^{-2m})$, $p_1 =
p_2 = \frac12$. If $\nu > 2^m$, $\gamma_2 < 1$ holds, so the
similarity transformations $T_1$ and $T_2$ are contractions, mapping
the interval $I = [(1 - \gamma_1)^{-1}, (1 - \gamma_2)^{-1}]$
into itself. By Theorem \ref{th:singular}, the distribution of
$Y_m$ is singular w.r.t. Lebesgue measure if $\nu > 2^{2m} \log 2$
$(m \ge 1)$. Moreover, there is no overlap in the ranges of $T_1$ and
$T_2$ iff $\nu > 2^{2m} \log(2\cosh(2^{-m}))$. As $m \to \infty$
this means asymptotically that $\nu > \frac12 + 2^{2m} \log 2 +
o(1)$.

For $m \ge 0$ and $k$ integer let
\[
\mu_{mk} = \ev(\xi_m^k) = \exp(-\nu k 2^{-2m}) \cosh(k2^{-m}) .
\]
Since Theorem \ref{th:moments} is applicable to $2^{2m} Y_m$, one
obtains that $\ev(Y_m^p) < \infty$ if and only if $\mu_{mp} < 1$ and
then the following recursion is valid for $p \ge 1$ integer:
\begin{equation} \label{eq:Ymmoment}
\ev(Y_m^p) = \frac{1}{1-\mu_{mp}} \sum_{k=0}^{p-1} \binom{p}{k}
2^{-2m(p-k)} \mu_{mk} \ev(Y_m^k).
\end{equation}
Now using $\cosh(x) < e^x$ $(x > 0)$, it follows that
$\mu_{mk} < \exp\left(k2^{-m}(1 - \nu 2^{-m})\right)$. So
$\mu_{mk} < 1$ for any $k \ge 1$ if $\nu \ge 2^m$. If $0 < \nu
< 2^m$, only finitely many positive moments are finite, since
$\mu_{mk} \ge 1$ when $0 < \nu < 2^{2m} k^{-1} \log\cosh(k 2^{-m})
\to 2^m$ as $k \to \infty$.

More importantly,
\begin{equation} \label{eq:muless}
\mu_{mk} < \exp\left(k 2^{-2m} \left(\frac{k}{2} - \nu \right)
\right) \qquad (k \ge 1) ,
\end{equation}
since $\cosh(x) < \exp(x^2/2)$ when $x > 0$ (compare the Taylor
series). Thus $\mu_{mk} < 1$ for any $m \ge 0$ if $\nu \ge
\frac{k}{2}$. This condition is sharp as $m \to \infty$. For,
apply $e^x = 1 + x + o(x)$ and $\cosh(x) = 1 + x^2/2 + o(x^2)$ (as
$x \to 0$) to the definition of $\mu_{mk}$. Then
\begin{equation} \label{eq:mumk}
\mu_{mk} = 1 + k 2^{-2m} \left(\frac{k}{2} - \nu\right) + o(2^{-2m}) ,
\end{equation}
for any fixed $k$ as $m \to \infty$.

\begin{lem} \label{le:posmoment}
If $p$ is a positive integer such that $\frac{p}{2} < \nu$, then
\begin{equation} \label{eq:posmoment}
\lim_{m \to \infty} \ev(Y_m^p) = \frac{1}{\prod_{k=1}^p \left(
\nu - \frac{k}{2}\right)} < \infty .
\end{equation}
\end{lem}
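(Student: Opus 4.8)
The plan is to prove the limit by induction on $p$, using the recursion formula (\ref{eq:Ymmoment}) together with the asymptotic expansion (\ref{eq:mumk}) of $\mu_{mk}$. The base case $p=0$ is immediate since $\ev(Y_m^0)=1$ for all $m$. For the inductive step, suppose $\frac{p}{2}<\nu$, so that $\frac{k}{2}<\nu$ for all $1\le k\le p$ as well, and assume $\ev(Y_m^k)\to\left(\prod_{j=1}^k(\nu-\frac{j}{2})\right)^{-1}$ as $m\to\infty$ for each $0\le k\le p-1$. Note also that by (\ref{eq:muless}) we have $\mu_{mp}<1$ for every $m\ge 0$, so $\ev(Y_m^p)<\infty$ by Theorem \ref{th:moments}, and the recursion (\ref{eq:Ymmoment}) is legitimate.

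**Next I would** analyze the two asymptotic ingredients in the recursion. From (\ref{eq:mumk}) with $k=p$, we get $1-\mu_{mp} = p\,2^{-2m}\left(\nu-\frac{p}{2}\right)+o(2^{-2m})$, so $\frac{1}{1-\mu_{mp}} = \frac{2^{2m}}{p\left(\nu-\frac p2\right)}\left(1+o(1)\right)$; the hypothesis $\frac p2<\nu$ guarantees this blows up like $2^{2m}$ with a positive coefficient. For the sum $\sum_{k=0}^{p-1}\binom{p}{k}2^{-2m(p-k)}\mu_{mk}\ev(Y_m^k)$, the term with $k=p-1$ carries the factor $2^{-2m}$, while all terms with $k\le p-2$ carry a factor $2^{-2m(p-k)}$ with $p-k\ge 2$, hence are $o(2^{-2m})$ once we know the $\ev(Y_m^k)$ stay bounded (which follows from the inductive hypothesis, since convergent sequences are bounded). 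So the sum is dominated by its $k=p-1$ term: $\binom{p}{p-1}2^{-2m}\mu_{m,p-1}\ev(Y_m^{p-1}) = p\,2^{-2m}\cdot(1+o(1))\cdot\left(\prod_{j=1}^{p-1}(\nu-\frac j2)\right)^{-1}+o(2^{-2m})$, using $\mu_{m,p-1}\to 1$ from (\ref{eq:mumk}) and the inductive hypothesis.

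**Then I would** multiply the two pieces: $\ev(Y_m^p) = \frac{2^{2m}}{p\left(\nu-\frac p2\right)}(1+o(1))\cdot\left(p\,2^{-2m}\left(\prod_{j=1}^{p-1}(\nu-\tfrac j2)\right)^{-1}+o(2^{-2m})\right)$. The powers of $2$ cancel, the factors of $p$ cancel, and the $o(\cdot)$ terms wash out, leaving $\ev(Y_m^p)\to\frac{1}{\nu-\frac p2}\cdot\left(\prod_{j=1}^{p-1}(\nu-\frac j2)\right)^{-1} = \left(\prod_{k=1}^p(\nu-\frac k2)\right)^{-1}$, which is exactly (\ref{eq:posmoment}). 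This also shows the limit is finite, as claimed.

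**The main obstacle** I anticipate is bookkeeping the error terms carefully enough to be sure nothing of order $2^{-2m}$ is lost when the whole sum gets multiplied by something of order $2^{2m}$: one must check that the $k\le p-2$ contributions really are $o(2^{-2m})$ and not merely $O(2^{-2m})$, which is where using the convergence (hence boundedness) of $\ev(Y_m^k)$ for $k<p$ from the inductive hypothesis is essential, and that the $o(2^{-2m})$ in (\ref{eq:mumk}) applied to $1-\mu_{mp}$ does not degrade the leading asymptotics of $(1-\mu_{mp})^{-1}$ — this is fine because the coefficient $p(\nu-\frac p2)$ of the leading term is strictly positive under the hypothesis. A clean way to organize this is to write $\ev(Y_m^p) = \frac{1}{1-\mu_{mp}}\left[p\,\mu_{m,p-1}\,2^{-2m}\ev(Y_m^{p-1}) + R_m\right]$ with $R_m = \sum_{k=0}^{p-2}\binom{p}{k}2^{-2m(p-k)}\mu_{mk}\ev(Y_m^k) = O(2^{-4m})$, and then take $m\to\infty$ term by term.
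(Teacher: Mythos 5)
Your proof is correct, and it takes a genuinely different (and somewhat more self-contained) route than the paper. Both arguments start from the recursion (\ref{eq:Ymmoment}) and the expansion (\ref{eq:mumk}), but the paper unrolls the recursion completely into a closed-form rational function of $\mu_{m1},\dots,\mu_{mp}$, in analogy with (\ref{eq:fraction}), and then invokes the combinatorial fact that the coefficients of the numerator sum to $p!\,2^{-2mp}$ (justified by the informal argument given below (\ref{eq:fraction})); since each $\mu_{mk}\to 1$, $2^{2mp}$ times the numerator tends to $p!$ while $2^{2mp}$ times the denominator $(1-\mu_{m1})\cdots(1-\mu_{mp})$ tends to $p!\prod_{k=1}^p(\nu-\frac k2)$, and the ratio gives (\ref{eq:posmoment}). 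You instead keep the recursion as is and induct on $p$, observing that only the $k=p-1$ term of the sum survives at order $2^{-2m}$, so that the limit of $\ev(Y_m^p)$ is $\bigl(\nu-\frac p2\bigr)^{-1}$ times the limit of $\ev(Y_m^{p-1})$. Your version buys a cleaner error analysis: it needs only the boundedness of the lower moments (from the inductive hypothesis) and the strict positivity of $\nu-\frac p2$, and it bypasses the coefficient-sum identity entirely, which in the paper is the least rigorously established ingredient. What the paper's version buys is the explicit rational-function structure of $\ev(Y_m^p)$, which is what supports the analogy with (\ref{eq:Jp}) and the combinatorial discussion in Section 3. Your bookkeeping of the remainder $R_m=O(2^{-4m})$ and of the $o(2^{-2m})$ term in $1-\mu_{mp}$ is sound, so the proof goes through as written.
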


\begin{proof}
By (\ref{eq:muless}), for any positive integer $p$ such that
$\frac{p}{2} < \nu$ we have $\mu_{mp} < 1$.  Since Theorem
\ref{th:moments} is valid for $2^{2m} Y_m$, the recursion formula
(\ref{eq:Ymmoment}) holds, and by induction one gets $\ev(Y_m^p)$ as a
rational function of the moments $\mu_{m1}, \dots, \mu_{mp}$,
similarly to formula (\ref{eq:fraction}). The argument below formula
(\ref{eq:fraction}) also applies here too, showing that the sum of the
coefficients in the numerator of this rational function is $p!
2^{-2mp}$. The extra factor comes from the difference that $Y_m$ is
multiplied by $2^{2m}$ here, compare equations (\ref{eq:moments}) and
(\ref{eq:Ymmoment}). Since each $\mu_{mk} \to 1 $ as $m \to \infty$,
it follows that $2^{2mp}$ times the numerator tends to $p!$.

By (\ref{eq:mumk}), we get that $1 - \mu_{mk} = k 2^{-2m}(\nu -
\frac{k}{2}) + o(2^{-2m})$ if $k$ is fixed and $m \to \infty$. So
$2^{2mp}$ times the denominator of the rational function tends to
$p! \prod_{k=1}^p (\nu - \frac{k}{2})$ as $m \to \infty$. This and
the limit of the numerator together imply the statement of the
lemma.
\end{proof}

Our next objective is to give an asymptotic formula, similar to
({\ref{eq:posmoment}), for the negative moments of $Y_m$ as $m \to
\infty$.

\begin{lem} \label{le:negmoment}
For all integer $p \ge 1$, we have
\begin{equation} \label{eq:negmoment}
\lim_{m \to \infty} \ev(Y_m^{-p}) = \lim_{m \to \infty}
\ev(Y_m^{-1}) \: \prod_{k=1}^{p-1} \left(\nu + \frac{k}{2}\right) ,
\end{equation}
where $\lim_{m \to \infty} \ev(Y_m^{-1}) < \infty$.
\end{lem}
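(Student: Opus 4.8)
The plan is to derive, parallel to (\ref{eq:Ymmoment}), a recursion for the negative moments of $Y_m$ by binomially expanding the self-similarity relation (\ref{eq:Ymself}), and then to pass to the limit with the help of Lemma~\ref{le:asconv}. The crucial ingredient will be the uniform estimate $\sup_{m}\ev(Y_m^{-q})<\infty$ for every real $q>0$; establishing this is the only non-routine step, since the naive bound $Y_m^{-q}\le 2^{2mq}$ (coming from the single term $k=0$) is useless as $m\to\infty$.

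To get the uniform bound I would keep only the first $2^{2m}+1$ summands in the definition of $Y_m$ and apply the arithmetic--geometric mean inequality to them:
\[
Y_m \;\ge\; 2^{-2m}\bigl(2^{2m}+1\bigr)\exp\!\left(\frac{2^{-m}}{2^{2m}+1}\sum_{k=0}^{2^{2m}}\widetilde{S}_m(k)\;-\;\frac{\nu\,2^{-2m}\cdot2^{2m}}{2}\right).
\]
Since $\sum_{k=0}^{2^{2m}}\widetilde{S}_m(k)=\sum_{j=1}^{2^{2m}}(2^{2m}+1-j)\widetilde{X}_m(j)$ is a weighted sum of i.i.d.\ symmetric signs, raising to the power $-q$, taking expectations, and using $\ev e^{-cX}=\cosh c$ together with $\cosh x\le e^{x^2/2}$ gives, after relabelling,
\[
\ev(Y_m^{-q}) \;\le\; \Bigl(\tfrac{2^{2m}}{2^{2m}+1}\Bigr)^{q}\,e^{q\nu/2}\prod_{l=1}^{2^{2m}}\cosh\!\Bigl(\tfrac{q\,l\,2^{-m}}{2^{2m}+1}\Bigr)\;\le\;e^{q\nu/2+q^2/3},
\]
uniformly in $m$, because the window length $2^{2m}$ is exactly what keeps all three factors bounded.

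Next I would derive the recursion. From $Y_m\ge 2^{-2m}\sum_{k\ge0}\gamma_1^{\,k}=2^{-2m}/(1-\gamma_1)$, with $\gamma_1=e^{-2^{-m}-\nu2^{-2m}}$, the quantity $2^{-2m}/(\xi_m Y_m)$ is a.s.\ at most $u_m:=(1-\gamma_1)/\gamma_1=O(2^{-m})$, so for all large enough $m$ the series $(2^{-2m}+\xi_m Y_m)^{-p}=\sum_{j\ge0}\binom{-p}{j}2^{-2mj}(\xi_m Y_m)^{-p-j}$ converges and its partial sums are dominated (for each fixed $m$) by the deterministic constant $\bigl(\xi_m Y_m-2^{-2m}\bigr)^{-p}\le\bigl(2^{-2m}(2\gamma_1-1)/(1-\gamma_1)\bigr)^{-p}$. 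Taking expectations in (\ref{eq:Ymself}) and using the independence of $\xi_m$ and $Y_m$, i.e.\ $\ev\bigl((\xi_m Y_m)^{-q}\bigr)=\mu_{m,-q}\,\ev(Y_m^{-q})$ with $\mu_{m,-q}=\ev(\xi_m^{-q})=e^{\nu q2^{-2m}}\cosh(q2^{-m})$, I obtain
\[
\bigl(1-\mu_{m,-p}\bigr)\,\ev(Y_m^{-p})\;=\;-\,p\,2^{-2m}\,\mu_{m,-p-1}\,\ev(Y_m^{-p-1})\;+\;R_m ,
\]
where the tail $R_m=\sum_{j\ge2}\binom{-p}{j}2^{-2mj}\mu_{m,-p-j}\ev(Y_m^{-p-j})$ satisfies $|R_m|\le 2^{p+2}\,2^{-4m}\,\mu_{m,-p-2}\,\ev(Y_m^{-p-2})=O(2^{-4m})$ by the uniform bound (using $\sum_{j\ge2}\binom{p+j-1}{j}v^j\le 2^{p+2}v^2$ for $0\le v\le\tfrac12$). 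A Taylor expansion, as in (\ref{eq:mumk}) but with $\nu$ replaced by $-\nu$, gives $\mu_{m,-p}=1+p\,2^{-2m}(\nu+\tfrac p2)+o(2^{-2m})$ and $\mu_{m,-p-1}=1+o(1)$; substituting, dividing by $-p\,2^{-2m}$, and once more using the uniform bound to kill the error terms yields
\[
\ev(Y_m^{-p-1})\;=\;\Bigl(\nu+\tfrac p2\Bigr)\,\ev(Y_m^{-p})\;+\;o(1)\qquad(p\ge1,\ m\to\infty).
\]

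Finally I would pass to the limit. By Lemma~\ref{le:asconv}, $Y_m\to\mathcal{I}$ a.s.\ with $\mathcal{I}>0$, so $Y_m^{-q}\to\mathcal{I}^{-q}$ a.s.; the uniform bound on $\ev(Y_m^{-q'})$ for any $q'>q$ makes $\{Y_m^{-q}\}_m$ uniformly integrable, hence $\ev(Y_m^{-q})\to\ev(\mathcal{I}^{-q})<\infty$, and in particular $\lim_m\ev(Y_m^{-1})<\infty$. Letting $m\to\infty$ in the last display and iterating from $p=1$ gives $\lim_m\ev(Y_m^{-p})=\lim_m\ev(Y_m^{-1})\prod_{k=1}^{p-1}(\nu+\tfrac k2)$, which is (\ref{eq:negmoment}). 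I expect the uniform negative-moment bound to be the main obstacle; everything after that is bookkeeping with Taylor expansions and dominated convergence.
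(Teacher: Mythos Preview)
Your argument is correct, and the recursion you derive is equivalent to the paper's (the paper expands $Y_m^{-k}/(1-2^{-2m}Y_m^{-1})^k$ as a positive series in $2^{-2m}Y_m^{-1}$, you expand $(2^{-2m}+\xi_mY_m)^{-p}$ as an alternating series in $2^{-2m}/(\xi_mY_m)$; both isolate the same two leading terms). The genuine difference is in how the uniform control of negative moments is obtained. The paper first bounds the tail $a(m,k)$ \emph{relatively} by $z_{m,k+1}\cdot O(2^{-3m})$ using only the pointwise estimate $Y_m^{-1}\le 2^{m+1}$, thereby getting $z_{m,k+1}/z_{m,k}\to\nu+\tfrac{k}{2}$ without yet knowing any $z_{m,k}$ is bounded; only afterwards does it show $\sup_m\ev(Y_m^{-2})<\infty$ via a Cauchy--Schwarz trick $\bigl(\ev Y_m^{-2}\bigr)^2\le\ev(Y_m^{2})\,\ev(Y_m^{-6})$ combined with Lemma~\ref{le:posmoment} (for $\nu>1$) and a monotonicity-in-$\nu$ argument (for $\nu\le 1$). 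Your AM--GM bound on the block of length $2^{2m}+1$ is a cleaner alternative: it gives $\sup_m\ev(Y_m^{-q})<\infty$ for every $q>0$ directly, is self-contained (no appeal to Lemma~\ref{le:posmoment}, no case split on $\nu$), and immediately yields uniform integrability for the final passage to the limit. One cosmetic point: the exponent in your bound comes out as $q^2/6$ rather than $q^2/3$ (since $\sum_{l=1}^{N}l^2\le N^2(N+1)/3$ with $N=2^{2m}$), but this is irrelevant to the argument.
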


\begin{proof}
We want to show (\ref{eq:negmoment}) by establishing a recursion.
Introduce the notations $z_{m,k} = \ev(Y_m^{-k})$ and $\mu_{m,-k} =
\ev(\xi_m^{-k})$ for $k \ge 1$ integer. By (\ref{eq:Ymn}), $0 <
Y_m^{-1} < 2^{2m}$, hence all negative moments $z_{m,k}$ of $Y_m$ are
finite.

The self-similarity equation (\ref{eq:Ymself}) implies that $\xi_m
Y_m \law Y_m - 2^{-2m}$ and so
\[
\xi_m^{-1} Y_m^{-1} \law \frac{Y_m^{-1}}{1 - 2^{-2m} Y_m^{-1}} ,
\]
where $\xi_m$ and $Y_m$ are independent. Taking $k$th moment ($k \ge
1$ integer) on both sides and applying the Taylor series
\[
\frac{x^k}{(1-x)^k} = \sum_{n=k}^\infty \binom{n-1}{k-1} x^n ,
\]
valid for any $|x| < 1$, one obtains
\[
\mu_{m,-k} \: z_{m,k} = \sum_{n=k}^\infty \binom{n-1}{k-1}
2^{-2m(n-k)}  z_{m,n} ,
\]
with the notations introduced above. This implies that
\begin{equation} \label{eq:recurs}
(\mu_{m,-k} - 1) z_{m,k} - k 2^{-2m} z_{m,k+1} = a(m,k) ,
\end{equation}
where
\[
a(m,k) =  \sum_{n=k+2}^\infty \binom{n-1}{k-1} 2^{-2m(n-k)}  z_{m,n}
\ge 0 .
\]

Next we want to give an upper bound for $a(m,k)$, which goes to zero
fast enough as $m \to \infty$. Since $\xi_m \ge \gamma_1 =
\exp(-2^{-m} - \nu 2^{-2m})$, by (\ref{eq:Ymn}) it follows that
\[
Y_m^{-1} \le 2^{2m} \left(\sum_{j=0}^{\infty} \gamma_1^j\right)^{-1}
= 2^{2m} (1 - \gamma_1) \le 2^{2m} (2^{-m} + \nu 2^{-2m})
\le 2^{m+1} ,
\]
if $m \ge \log(\nu) / \log(2)$, where we used that $1 - e^{-x} \le x$,
for any real $x$. This implies that $z_{m,r+j} = \ev(Y_m^{-r-j}) \le
\ev(Y_m^{-r}) \left( \sup(Y_m^{-1}) \right)^j \le z_{m,r} 2^{(m+1)j}$
for $r, j \ge 0$. Substituting this into the definition of $a(m,k)$,
one gets that
\begin{eqnarray*}
a(m,k) &\le& z_{m,k+1} \sum_{n=k+2}^\infty \binom{n-1}{k-1}
2^{-2m(n-k)} 2^{(m+1)(n-k-1)} \\
&=& z_{m,k+1} 2^{k(m-1)-m-1} \sum_{n=k+2}^\infty \binom{n-1}{k-1}
2^{(1-m)n} \\
&=& z_{m,k+1} 2^{-m-1} \left((1 - 2^{1-m})^{-k} - 1 - k
2^{1-m} \right) \\
&\le& z_{m,k+1} 2^{-m-1} 4k(k+1)2^{-2m} = z_{m,k+1} 2k(k+1) 2^{-3m} ,
\end{eqnarray*}
if $m$ is large enough, depending on $k$.

Let us substitute this estimate of $a(m,k)$ into (\ref{eq:recurs})
and express the following ratio:
\[
\frac{z_{m,k+1}}{z_{m,k}} =  \frac{\mu_{m,-k} - 1}{k 2^{-2m}
(1 + O(2^{-m}))}.
\]
Apply the asymptotics (\ref{eq:mumk}) here with $-k$:
\[
\frac{z_{m,k+1}}{z_{m,k}} =  \frac{\nu + \frac{k}{2} + o(1)}
{1 + O(2^{-m})} ,
\]
as $m \to \infty$. This implies the equality
$\lim_{m \to \infty} z_{m,k+1}/z_{m,k} =
\nu + \frac{k}{2}$ ,
and thus for any positive integer $p$,
\begin{equation} \label{eq:negmomrat}
\lim_{m \to \infty} \frac{\ev(Y_m^{-p})}{\ev(Y_m^{-1})}
= \prod_{k=1}^{p-1} \left(\nu + \frac{k}{2}\right) .
\end{equation}

It remains to show that $\ev(Y_m^{-1})$ has a finite limit as $m \to
\infty$. Writing $Y_m^{-2} = Y_m Y_m^{-3}$ and applying the
Cauchy-Schwarz inequality, one obtains $\left(\ev(Y_m^{-2})\right)^2
\le \ev(Y_m^2) \ev(Y_m^{-6})$, or $\ev(Y_m^{-2}) \le \ev(Y_m^2)
\ev(Y_m^{-6})/\ev(Y_m^{-2})$. Suppose first that $\nu > 1$ and take
limit here on the right hand side as $m \to \infty$, applying
(\ref{eq:posmoment}) and (\ref{eq:negmomrat}). It follows that
$\sup_{m \ge 1} \ev(Y_m^{-2}) \le \infty$. As $\ev(Y_m^{-2})$ is an
increasing function of $\nu$ by its definition, hence the same is true
for any $\nu \in (0, 1]$ as well. Since by Lemma \ref{le:asconv}, $Y_m
\to \mathcal{I}$ a.s., where each $Y_m$ and also $\mathcal{I}$ take
values in $(0, \infty)$ a.s., it follows that $Y_m^{-1} \to I^{-1}$
a.s. Then by the $L^2$ uniform boundedness of $Y_m^{-1}$ $(m \ge 0)$
shown above, $\ev(Y_m^{-1}) \to \ev(I^{-1}) < \infty$ follows as
well. This ends the proof of the lemma.
\end{proof}

Finally, it turns out that $Y_m^{-1}$ converges to $\mathcal{I}^{-1}$
in any $L^p$. This makes it possible to recover the result
(\ref{eq:I}) of \cite{Dufresne (1990)} and \cite{Yor (1992)}.

\begin{thm} \label{th:gamma}
Let $B_m(t) = 2^{-m} \widetilde{S}_m(t2^{2m})$, $t \ge 0$, $m \ge 0$,
be a sequence of shrunken simple symmetric RWs that a.s. converges to
BM $(B(t), t \ge 0)$, uniformly on bounded intervals. Take
\[
Y_m = 2^{-2m} \sum_{k=0}^{\infty} \exp \left(B_m(k 2^{-2m})
- \nu k2^{-2m}\right) = 2^{-2m} \left(1 + \xi_{m1} + \xi_{m1} \xi_{m2} +
\cdots \right)
\]
and
\[
\mathcal{I} = \int_0^\infty \exp\left(B(t) - \nu t \right) \di t
\]
when $\nu > 0$. Then the following statements hold true:

(a) $Y_m^{-1}$ converges to $\mathcal{I}^{-1}$ in $L^p$ for any
$p \ge 1$ real and $\lim_{m \to \infty} \ev(Y_m^{-p}) =
\ev(\mathcal{I}^{-p}) < \infty$;

(b) $\mathcal{I} \law 2 / Z_{2\nu}$, where $Z_{2\nu}$ is a gamma
distributed random variable with index $2\nu$ and parameter $1$;

(c) $Y_m$ converges to $\mathcal{I}$ in $L^p$ for any integer
$p$ such that $1 \le p < 2\nu$ (supposing $\nu > \frac12$) and
then $\lim_{m \to \infty} \ev(Y_m^p) = \ev(\mathcal{I}^p) <
\infty$. The same is true for any real $q$, $1 \le q < p$.
\end{thm}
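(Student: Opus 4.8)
The plan is to prove the three parts essentially in sequence, leveraging the a.s.\ convergence $Y_m \to \mathcal{I}$ from Lemma~\ref{le:asconv} together with the moment asymptotics from Lemmas~\ref{le:posmoment} and~\ref{le:negmoment}. For part~(a), I would first note that Lemma~\ref{le:asconv} gives $Y_m^{-1} \to \mathcal{I}^{-1}$ a.s.\ (all quantities being a.s.\ strictly positive and finite). The key additional input is uniform integrability of $\{(Y_m^{-1})^p : m \ge 0\}$ for every real $p \ge 1$; this follows because $\sup_m \ev(Y_m^{-q}) < \infty$ for every integer $q \ge 1$, which is exactly what the recursion in the proof of Lemma~\ref{le:negmoment} delivers (the ratios $z_{m,k+1}/z_{m,k}$ converge, and $\ev(Y_m^{-1})$ is bounded). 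Given a real $p$, pick an integer $q > p$; then $\{(Y_m^{-1})^p\}$ is bounded in $L^{q/p}$, hence uniformly integrable, hence $Y_m^{-1} \to \mathcal{I}^{-1}$ in $L^p$ and $\ev(Y_m^{-p}) \to \ev(\mathcal{I}^{-p}) < \infty$.

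For part~(b), the idea is to identify the limit distribution through its negative integer moments. By part~(a), $\ev(\mathcal{I}^{-p}) = \lim_m \ev(Y_m^{-p})$, and by \eqref{eq:negmomrat} (equivalently \eqref{eq:negmoment}) this equals $\ev(\mathcal{I}^{-1}) \prod_{k=1}^{p-1}(\nu + k/2)$. To pin down the remaining constant $c := \ev(\mathcal{I}^{-1})$, I would use the self-similarity of $\mathcal{I}$ itself — or more directly, pass to the limit in the self-similarity relation \eqref{eq:Ymself}: writing $z_{m,1}$ in terms of $z_{m,2}$ via \eqref{eq:recurs} with $k=1$, one finds $\ev(\mathcal{I}^{-1}) = \nu$ after taking $m \to \infty$ (the error term $a(m,1)$ and the $k2^{-2m}z_{m,2}$ term both vanish once multiplied through appropriately, since $z_{m,2}/z_{m,1} \to \nu + 1/2$ and $\mu_{m,-1} - 1 \sim 2^{-2m}(\nu + 1/2)$). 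Hence $\ev(\mathcal{I}^{-p}) = \nu \prod_{k=1}^{p-1}(\nu + k/2) = 2^{-p}\prod_{k=0}^{p-1}(2\nu + k) = 2^{-p}\,\Gamma(2\nu + p)/\Gamma(2\nu)$, which is precisely the $p$th negative moment of $2/Z_{2\nu}$, i.e.\ $2^{-p}\ev(Z_{2\nu}^p)$. Since $\mathcal{I}^{-1}$ is bounded below away from $0$ only in the limiting sense — more carefully, since these moments grow slowly enough (Carleman's condition for $Z_{2\nu}$, a gamma variable, holds), the negative integer moments of $\mathcal{I}$ determine its distribution, giving $\mathcal{I} \law 2/Z_{2\nu}$.

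For part~(c), now that $\mathcal{I} \law 2/Z_{2\nu}$ we know $\ev(\mathcal{I}^p) = 2^p \Gamma(2\nu - p)/\Gamma(2\nu) < \infty$ exactly when $p < 2\nu$. Fix an integer $p$ with $1 \le p < 2\nu$. By Lemma~\ref{le:posmoment}, $\ev(Y_m^p) \to \prod_{k=1}^p(\nu - k/2)^{-1} = 2^p\prod_{k=1}^p(2\nu - k)^{-1} = 2^p\Gamma(2\nu-p)/\Gamma(2\nu) = \ev(\mathcal{I}^p)$, so the $p$th moments converge to the right finite limit. Combined with $Y_m \to \mathcal{I}$ a.s.\ and the convergence of moments to a finite value, this gives uniform integrability of $\{Y_m^p\}$ and hence $L^p$ convergence $Y_m \to \mathcal{I}$; for real $q$ with $1 \le q < p$, boundedness of $\{Y_m^p\}$ in $L^{p/q}$ (applied to $Y_m^q$) gives the same conclusion. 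The main obstacle I anticipate is part~(b): getting $L^p$ convergence of the negative moments is routine via uniform integrability, but correctly evaluating the constant $\ev(\mathcal{I}^{-1}) = \nu$ by a careful limiting argument in \eqref{eq:recurs}, and then invoking the right moment-determinacy criterion so that matching all negative integer moments actually forces equality in distribution, is the step requiring the most care. (One convenient alternative for determinacy: $\mathcal{I}^{-1}$ equals in law $Z_{2\nu}/2$, and a gamma law has a moment generating function finite near $0$, so it is moment-determinate; since $\mathcal{I}^{-1} > 0$, the moments $\ev(\mathcal{I}^{-p})$ are the moments of a candidate determinate law, forcing the identification.)
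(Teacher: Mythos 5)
Parts (a) and (c) of your plan are sound and essentially coincide with the paper's argument: a.s.\ convergence from Lemma~\ref{le:asconv}, uniform integrability supplied by Lemmas~\ref{le:negmoment} and~\ref{le:posmoment}, and (for (c)) the computation that $\prod_{k=1}^p(\nu-k/2)^{-1}=2^p\Gamma(2\nu-p)/\Gamma(2\nu)=\ev(\mathcal{I}^p)$. The genuine gap is in part (b), at exactly the point you yourself flag as delicate: the evaluation of $c=\ev(\mathcal{I}^{-1})=\nu$. Your proposed route --- passing to the limit in \eqref{eq:recurs} with $k=1$ --- cannot produce this value. That relation, $(\mu_{m,-1}-1)z_{m,1}-2^{-2m}z_{m,2}=a(m,1)$, is homogeneous of degree one in the negative moments $z_{m,k}$ (multiplying every $z_{m,k}$ by a constant leaves it valid), so it can only ever determine ratios: dividing by $2^{-2m}z_{m,1}$ and letting $m\to\infty$ merely reproduces $\lim_m z_{m,2}/z_{m,1}=\nu+\tfrac12$, i.e.\ \eqref{eq:negmomrat} for $p=2$, and says nothing about $\lim_m z_{m,1}$ itself. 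The same is true of any identity derived from \eqref{eq:Ymself} applied to negative powers; unlike the positive-moment recursion \eqref{eq:Ymmoment}, which is anchored by the inhomogeneous term coming from $\ev(Y_m^0)=1$, the negative-moment recursion has no such anchor. Your parenthetical ``convenient alternative'' is circular as written, since it starts from $\mathcal{I}^{-1}\law Z_{2\nu}/2$, which is the statement to be proved.

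The paper closes this gap with an input your proposal does not use: the full sequence $a_0=1$, $a_p=c\prod_{k=1}^{p-1}(\nu+k/2)$ for $p\ge1$ must be the moment sequence of a probability law on $[0,\infty)$ carried by $(0,\infty)$, and this sequence is determinate by the Carleman-type bound $a_p\le CR^p(2p)!$ (quoting \cite{Simon (1998)}). Since $a_p=(c/\nu)\,b_p$ for every $p\ge1$, where $b_p=\ev\left((Z_{2\nu}/2)^p\right)$, while $a_0=b_0=1$, the unique law with moments $(a_p)$ is $(1-c/\nu)\delta_0+(c/\nu)\,\pr_{Z_{2\nu}/2}$; as this must be a probability measure and $\pr(\mathcal{I}^{-1}=0)=0$, one is forced to $c=\nu$. (The paper phrases this as comparing the two moment generating functions, which are finite near the origin, and normalizing at $u=0$.) Once $c=\nu$ is established, the moments match, determinacy gives (b), and the rest of your argument for (c) goes through unchanged.
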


\begin{proof}
By Lemma \ref{le:asconv}, $Y_m \to \mathcal{I}$ a.s., where each
$Y_m$ and also $\mathcal{I}$ take values in $(0, \infty)$ a.s. Hence
$Y_m^{-1} \to \mathcal{I}^{-1}$ a.s. By Lemma \ref{le:negmoment},
$\lim_{m \to \infty} \ev(Y_m^{-k}) < \infty$ for any $k \ge 1$
integer, so (a) follows.

Thus by (a) and Lemma \ref{le:negmoment}, for any integer $p \ge 1$,
\[
a_p = \ev(\mathcal{I}^{-p}) = c \prod_{k=1}^{p-1} \left(\nu +
\frac{k}{2}\right) = c 2^{1-p} (2 \nu + 1) \cdots (2 \nu + p -1)
< \infty ,
\]
where $c = \ev(\mathcal{I}^{-1})$. By a classical result, see
\cite{Simon (1998)}, a Stieltjes moment problem is determinate,
that is the moments uniquely determine a probability distribution on
$[0, \infty)$, if there exist constants $C>0$ and $R>0$ such that
$a_p \le C R^p (2p)!$ for any $p \ge 1$ integer. In the present case
$a_p \le c 2^{-p} (p+1)!$ when $\nu \le 1$ and $a_p \le c
(\nu / 2)^{p-1} (p+1)!$ when $\nu > 1$, so the moment problem for
$\mathcal{I}^{-1}$ is determinate and it also follows that
$\mathcal{I}^{-1}$ has a finite moment generating function in a
neighborhood of the origin.

Also, using the moments of the gamma distribution we get
\[
b_p = \ev(2^{-p} Z_{2\nu}^p) = 2^{-p}
\frac{\Gamma(2\nu + p)}{\Gamma(2\nu)} = 2^{-p} (2 \nu)(2 \nu + 1)
\cdots (2 \nu + p -1) ,
\]
for any $p \ge 1$, and $Z_{2 \nu} / 2$ has a finite moment
generating function in a neighborhood of the origin as well.
Writing down the two moment generating functions by the help of
the moments $a_p$ and $b_p$, respectively, it follows that
\[
\ev(\exp(u \mathcal{I}^{-1})) = \frac{c}{\nu} \ev(\exp(u Z_{2\nu}
/ 2))
\]
in a neighborhood of the origin. Substituting $u = 0$, one  obtains
$c = \ev(\mathcal{I}^{-1}) = \nu$ and this proves (b).

Finally, again, $Y_m \to \mathcal{I}$ a.s. by Lemma
\ref{le:asconv}. If $p$ is an integer such that $1 \le p < 2\nu$,
by Lemma \ref{le:posmoment}, using the moments of the gamma
distribution, and by (b), we have $\lim_{m \to \infty} \ev(Y_m^p)
= \ev(2^p Z_{2 \nu}^{-p}) = \ev(\mathcal {I}^{p})$. This proves
(c).
\end{proof}



\end{document}